\title{Leaky Zero Forcing on Induced Subgraphs of $d$-dimensional Grid Graphs with an Application to Hopi Rectangles}
\author{
Ryan Moruzzi, Jr.\thanks{Department of Mathematics, California State University, Northridge, USA, \\ Corresponding author. Email: \texttt{ryan.moruzzi@csun.edu}} \and Sagar Shah \thanks{California State University East Bay, Hayward, USA} \and Aaditeya Tripathi \thanks{California State University East Bay, Hayward, USA} 
}
\date{February 2026}
\theoremstyle{plain} 
\newtheorem{thm}{Theorem}
\newtheorem{cor}[thm]{Corollary}
\theoremstyle{definition}
\newtheorem{defn}[thm]{Definition}
\newtheorem{prop}[thm]{Proposition}
\newtheorem{lem}[thm]{Lemma}
\begin{document}

\maketitle

\begin{abstract}
We study zero forcing and $\ell$-leaky zero forcing on induced subgraphs of $d$-dimensional grid graphs. Using $\ell$-leaky forts, we prove structural results showing that for $\ell \le 2d-1$, every nonempty $\ell$-leaky fort in an induced subgraph of $P_{n_1}\square\cdots\square P_{n_d}$ intersects the boundary of the graph. These results give general bounds and, in certain settings, exact values for the $\ell$-leaky forcing number of induced subgraphs. Motivated by this framework, we introduce an integer lattice based definition of the Hopi rectangle graphs $HD(a,b)$ as induced subgraphs of $P_{a+b}\square P_{a+b}$. For this particular family of graphs, we show that the zero forcing number equals the maximum nullity, and we completely characterize the $\ell$-leaky forcing number for all $\ell\ge 1$.

\end{abstract}

\noindent\textbf{Keywords:} Zero forcing, maximum nullity, leaky forcing, forts, $d$-dimensional grid graphs, Aztec rectangle graphs, Hopi rectangle graphs.

\noindent{\bf AMS subject classification:} 05C50, 05C76.

\section{Introduction}
Zero forcing on a graph is an iterative graph coloring process where one begins with a subset of vertices colored blue and repeatedly applies a color change rule with the goal of eventually coloring the entire graph blue. The standard color change rule is as follows: if a blue vertex has exactly one non–blue neighbor, then that neighbor is \emph{forced} to become blue. This forcing may occur over multiple rounds until no further forces are possible. If an initial blue set of vertices is able to color all other vertices of $G$, it is called a \emph{zero forcing set}, with the minimum cardinality of such a set being called the \emph{zero forcing number} of $G$, denoted $Z(G)$.

Although zero forcing can be studied purely from a graph-theoretic perspective, it is closely connected to linear algebra through its relationship with the maximum nullity of a graph. This connection was first established by the AIM (American Institute of Mathematics) Minimum Rank–Special Graphs Work Group \cite{AIMMINIMUMRANKWorkgroup}, who introduced zero forcing in the context of the minimum rank and maximum nullity. They showed that the zero forcing number provides a graph-theoretic upper bound on the maximum nullity, and since then, the topic of zero forcing has flourished into a rich area of research in its own right. 

Zero forcing can also be viewed as a model for the spread of information in a network, and results in this area have found applications in contexts such as quantum control \cite{QuantumControl} and search algorithms \cite{SearchTrees}. In such applications, one may be concerned with the presence of faulty vertices that obstruct propagation. Therefore, the notion of a ``leaked'' or ``faulty'' vertex was introduced in \cite{KenterOGpaper}, leading to a variation of zero forcing known as \emph{leaky zero forcing} (or simply \emph{leaky forcing}) which follows the same color change rule as zero forcing, except that leaked vertices are not allowed to perform forces. The main question in this setting is: what is the minimum number of vertices required to force the graph despite the presence of $\ell$ leaked vertices, regardless of where those leaks occur? The minimum number needed is called the \emph{$\ell$-leaky forcing number} of a graph $G$, and is denoted $Z_{(\ell)}(G)$. As in standard zero forcing, \textit{forts} are key objects of interest. A \emph{fort} is a subset of vertices whose complement cannot force into it under the standard color change rule. In the leaky setting, understanding the structure of the analogous leaky forts is an essential tool for determining $Z_{(\ell)}(G)$. This concept of leaky forcing has attracted much recent attention; see, for instance, \cite{KenterOGpaper,Reslience,GeneralLeaky,herrman2024leakyforcingresiliencecartesian,QdLeaky}, as well as related variations in \cite{elias2024leakypositivesemidefiniteforcing}.

In this work, we study the zero forcing and leaky forcing number of induced subgraphs of $d$-dimensional grid graphs in general, then focus our study on a particular family of induced subgraphs of the $2$-dimensional grid $P_{n_1}\square P_{n_2}$. Our interest in this setting was motivated by the family of planar graphs known as Aztec rectangle graphs, which can be viewed as induced subgraphs of $2$-dimensional grid graphs. These graphs arise as a natural generalization of the Aztec diamond graph, originally studied from a combinatorial perspective through the enumeration of domino tilings of the Aztec diamond, the dual of the Aztec diamond graph \cite{FirstMentionAD,AztecDiamondGraphs-Benkart}. More recently, extensions to tilings of double Aztec rectangles have also been studied \cite{DoubleAztecRect}.

The paper is organized as follows. In Section \ref{sec:background}, we provide definitions and known results that are used in our work. In Section \ref{sec:FortsInducedSubgraphs}, we establish structural results describing leaky forts in induced subgraphs of $d$-dimensional grid graphs, which we then use to characterize the leaky forcing number of such induced subgraphs. In Section \ref{sec:ApplicationHopiRec}, we further motivate our focus on Aztec rectangle graphs, establish this family as an induced subgraph of $2$-dimensional grid graphs, and obtain an upper bound on the minimum rank. In Section \ref{sec:ZFonHopiRectanlges}, we apply our general results to prove equality of the zero forcing number and maximum nullity, and we fully characterize the leaky forcing numbers for Aztec rectangle graphs. We conclude in Section \ref{sec:FutureWork} with directions for future work.

\section{Preliminaries}\label{sec:background}

All graphs we consider are finite, simple, and undirected. For a graph $G$, we write $V(G)$ and $E(G)$ for its \emph{vertex} and \emph{edge sets}, respectively, and $|V(G)|$ for the \emph{size} of $G$. For $v\in V(G)$, let $N_G(v)$ denote the \emph{open neighborhood} of $v$ in $G$ which are vertices incident to $v$, and $\deg_G(v)=|N_G(v)|$ its degree. If $A\subseteq V(G)$, then $G[A]$ denotes the \emph{induced subgraph} of $A$ in $G$ which is a graph such that for any pair of vertices in $A$, if those vertices are adjacent in $G$, then they must be adjacent in $A$. An \emph{edge covering} by subgraphs of $G$ is a collection of subgraphs $H_1,\dots,H_t$ with $E(G)=\bigcup_{i=1}^t E(H_i)$. We write $P_n$ for the path on $n$ vertices and $C_n$ for the cycle on $n$ vertices. The \emph{$d$-dimensional grid graph} is the Cartesian product
$P_{n_1}\square P_{n_2}\square \cdots \square P_{n_d}$. Its vertex set is
$V(P_{n_1})\times \cdots \times V(P_{n_d})$, and $(x_1,\ldots,x_d)\sim (y_1,\ldots,y_d)$ if and only if
$(x_1,\ldots,x_d)$ and $(y_1,\ldots,y_d)$ differ in exactly one coordinate, say the $k$th, and
$x_k\sim y_k$ in $P_{n_k}$.

Let $\mathcal{S}(G)$ be the set of real symmetric matrices whose graph is $G$, that is 
$$S(G) = \{A= (a_{i,j})\in \mathbb{R}^{n\times n} : A = A^T,\ a_{i,j}\ne 0 \text{ iff } ij\in E(G)\ \text{and} \ i\ne j\},$$

\noindent and the \emph{minimum rank} and \emph{maximum nullity} of $G$ given as
\[
mr(G)=\min\{\text{rank}(A) : A\in \mathcal{S}(G)\},\qquad
M(G)=\max\{\text{null}(A) : A\in \mathcal{S}(G)\}  .
\]

Recall that zero forcing is an iterative coloring process on $G$ where, starting with an initial blue set $B\subseteq V(G)$, if a blue vertex $u$ has exactly one white neighbor $v$, then $u$ \emph{forces} $v$ (written $u\to v$), turning $v$ blue. A \emph{chronological set of forces} is an ordering of all such forces that can be applied round by round. $B$ is a \emph{zero forcing set} if the vertices in $B$ can force the entire graph blue, and the minimum size of a such zero forcing set is the \emph{zero forcing number of G}, denoted $Z(G)$.

In 2008, an AIM Special Graphs Work group determined the following connection between the maximum nullity $M(G)$ and the zero forcing number $Z(G)$ of a graph, which subsequently ushered in the study of zero forcing in its own right in the context of maximum nullity and minimum rank.   

\begin{prop}[Proposition 2.4, \cite{AIMMINIMUMRANKWorkgroup}]\label{thm:NullityInequality}
    For any graph $G$, $M(G)\le Z(G)$.
\end{prop}

In 2019, the notion of a leaky vertex to hinder the zero forcing process in graphs, akin to faulty sensors while observing a network, was introduced \cite{KenterOGpaper}. Leaky zero forcing follows the same color change rule as zero forcing, with the added notion of allowing up to $\ell$ vertices to be designated as leaks, meaning they are vertices that are unable to perform a force. The minimum size of an initial blue set that is able to force the entire graph blue for every placement of $\ell$ leaks is called the \emph{$\ell$-leaky forcing number}, denoted $Z_{(\ell)}(G)$. We collect here several results about leaky forcing that will be useful throughout this paper. First, Lemma~\ref{lem:moreleaksinq} gives us that leaky forcing numbers are monotonic in terms of the number of leaks. Lemma~\ref{lem:lowdegree} gives that vertices of particular degree must always belong to particular leaky forcing sets. Finally, Theorem~\ref{prop:twoways} gives a way to show that an initial set is a $1$-leaky set by way of unique ways to force each vertex. 

\begin{lem}[Lemma 2.1, \cite{KenterOGpaper}]
\label{lem:moreleaksinq}
For any graph $G$ on $n$ vertices,
\[  
Z_{(0)}(G) \leq Z_{(1)}(G) \leq Z_{(2)}(G) \leq \cdots \leq Z_{(n)}(G).  
\]
\end{lem}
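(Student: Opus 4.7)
The plan is to establish the chain of inequalities by proving the single-step monotonicity $Z_{(\ell)}(G) \leq Z_{(\ell+1)}(G)$ for each $\ell \in \{0,1,\dots,n-1\}$, and then iterate. The guiding intuition is that adding a leak can only remove forcing options (a leaked vertex cannot force), so any initial blue set that overcomes a larger leak set automatically overcomes any smaller one.

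Concretely, I would let $B$ be a minimum $(\ell+1)$-leaky forcing set, so $|B|=Z_{(\ell+1)}(G)$. To show $B$ is also an $\ell$-leaky forcing set, I would fix an arbitrary leak set $L \subseteq V(G)$ with $|L| = \ell$. Since $\ell < n$, there exists some vertex $v \in V(G) \setminus L$, and I would set $L' := L \cup \{v\}$, which has $|L'| = \ell + 1$. By hypothesis, $B$ forces $G$ under the leak set $L'$, so there exists a chronological sequence of forces $u_1 \to w_1,\; u_2 \to w_2,\; \dots$ that colors $V(G)$ blue, each satisfying $u_i \notin L'$. Because $L \subseteq L'$, every such $u_i$ also satisfies $u_i \notin L$, so the identical sequence is a valid chronological sequence of forces under the leak set $L$. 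This shows $B$ forces $G$ under $L$; since $L$ was arbitrary, $B$ is an $\ell$-leaky forcing set, giving $Z_{(\ell)}(G) \leq |B| = Z_{(\ell+1)}(G)$. Iterating over $\ell = 0, 1, \dots, n-1$ yields the full chain.

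There is no serious obstacle here: the argument is essentially a monotonicity/constraint-relaxation observation. The only mild subtlety is remembering that leaky vertices are \emph{prevented} from forcing rather than \emph{required} to, so shrinking the leak set can only enlarge the pool of admissible forces and therefore cannot break a successful forcing sequence. The boundary case $\ell = n-1$ to $\ell = n$ is handled the same way since we only need one vertex outside $L$ when $|L| = \ell < n$.
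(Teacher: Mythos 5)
Your proof is correct: the lemma is quoted in this paper without proof (it is cited from \cite{KenterOGpaper}), and your single-step argument --- augmenting an arbitrary leak set of size $\ell$ to one of size $\ell+1$ and observing that a chronological sequence of forces valid under the larger leak set remains valid under the smaller one --- is exactly the standard argument for this monotonicity. The handling of the boundary case and the remark that removing leaks only enlarges the pool of admissible forcers are both accurate, so there is nothing to add.
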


\begin{lem}[Lemma 2.4, \cite{KenterOGpaper}] \label{lem:lowdegree}
Let $G$ be a graph and choose $\ell \geq 0$. Then every $\ell$-leaky forcing set contains all vertices of degree at most $\ell$.
\end{lem}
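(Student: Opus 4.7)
The plan is to prove the contrapositive: if a set $B \subseteq V(G)$ omits some vertex of degree at most $\ell$, then $B$ fails to be an $\ell$-leaky forcing set. Concretely, suppose $v \in V(G)$ satisfies $\deg_G(v) \le \ell$ and $v \notin B$. I will exhibit an adversarial placement of at most $\ell$ leaks under which $B$ cannot color all of $G$ blue, which contradicts the definition of an $\ell$-leaky forcing set.

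The key construction is to designate $N_G(v)$ as leaked vertices; since $|N_G(v)| = \deg_G(v) \le \ell$, this is a valid leak placement (if $\deg_G(v) < \ell$, pad with arbitrary additional vertices chosen distinct from $v$). The only vertices that could ever force $v$ to turn blue are the members of $N_G(v)$, because the zero forcing color change rule requires the forcing vertex to be adjacent to $v$. But by construction every neighbor of $v$ is leaked, hence incapable of performing a force. Since $v$ is also not in the initial blue set, $v$ stays white throughout any chronological list of forces, so $B$ does not color all of $G$ blue under this leak configuration.

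This contradicts the assumption that $B$ is an $\ell$-leaky forcing set, completing the contrapositive. The only subtlety to articulate carefully is that the definition of a leaked vertex forbids it from \emph{performing} a force but does not forbid it from \emph{being forced} or from starting blue; this distinction is what guarantees that every prospective force on $v$ is blocked. I do not anticipate any serious obstacle here — the argument is essentially a one-line adversary strategy once the correct leak set is identified, and the main care needed is matching notation to the paper's convention for $Z_{(\ell)}(G)$ and the leaky color change rule.
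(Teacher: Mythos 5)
The paper states this lemma as a cited result from \cite{KenterOGpaper} without reproducing a proof, and your argument is precisely the standard one: place the (at most $\ell$) leaks on $N_G(v)$ for an omitted vertex $v$ of degree at most $\ell$, so that no vertex can ever force $v$. The proposal is correct and complete; the padding remark is unnecessary under the ``up to $\ell$ leaks'' convention but harmless.
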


\begin{thm}[Theorem 2.3, \cite{Reslience}]\label{prop:twoways}
A set $B$ is a $1$-leaky forcing set if and only if for every $v \in V(G)\setminus B$, there exist distinct forces $x \to v$ and $y \to v$ with $x \neq y$ in the set of all forces originating from $B$.
\end{thm}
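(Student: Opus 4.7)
The plan is to prove the two directions separately, with the forward direction being a clean contrapositive and the reverse direction requiring a careful rescheduling argument.

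For $(\Rightarrow)$, I would argue the contrapositive: if some $v \in V(G)\setminus B$ has at most one potential forcer in the set of all forces from $B$, then $B$ fails to be a $1$-leaky forcing set. If $v$ has no potential forcer, then $B$ is not even a zero forcing set, so it cannot be $1$-leaky. Otherwise, let $x$ be the unique potential forcer of $v$, and designate $x$ to be the leak. The key observation is that any force $u \to w$ realizable in a leaky chronology from $B$ with leak $x$ is also realizable in some leak-free chronology from $B$: the blue set reached just before $u$ forces $w$ is contained in the leak-free closure of $B$, so $u$ can perform the same force in a suitable leak-free chronology. Thus any potential forcer of $v$ in the leaky process is already a potential forcer in the leak-free process, hence is $x$; but $x$ is leaked and cannot force, so $v$ never turns blue.

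For $(\Leftarrow)$, assume each $v \notin B$ admits two distinct forcers in the leak-free force set from $B$, and fix any leak $\ell$. I would construct an explicit leaky chronology from $B$ that colors the entire graph. Begin with any leak-free chronology $\sigma$ that forces $V(G)$ from $B$, and traverse it step by step, building a leaky chronology $\tau$: whenever the next step of $\sigma$ is a force $u \to v$ with $u \ne \ell$, copy it into $\tau$; whenever the step is $\ell \to v$, skip it and mark $v$ as pending. Each pending $v$ has an alternative forcer $y \ne \ell$ provided by the hypothesis, and I would argue that these alternative forces can be scheduled to eventually color every pending vertex.

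The main obstacle is the scheduling of these alternative forces: the force $y \to v$ requires $v$ to be $y$'s unique white neighbor at the moment it fires, but that configuration may only arise after many other forces occur, and in $\tau$ those other forces may still be pending themselves. I would handle this via induction on the step number of $\sigma$, maintaining the invariant that the blue set of $\tau$ at time $t$ contains the blue set of $\sigma$ at time $t$ minus a small controlled collection of pending vertices that $\ell$ had already forced in $\sigma$. At each step we either append a non-leak force from $\sigma$, or, when the leak would have acted, eventually discharge the pending force using the alternative forcer once its other neighbors have been blued. The two-forcer hypothesis, applied to the pending vertices themselves as they arise, guarantees that no pending vertex becomes permanently stranded, so $\tau$ terminates with $V(G)$ entirely blue.
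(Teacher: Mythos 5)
The paper does not prove this statement; it is imported verbatim from the cited reference, so there is no in-paper argument to compare against and I judge your proposal on its own terms. Your forward direction is correct: any chronological list of forces in which the leak never acts is in particular a valid leak-free chronological list, so every force realizable in the leaky process already belongs to the set of forces originating from $B$; hence if $v$ has at most one potential forcer $x$, then either $B$ is not even a zero forcing set or leaking $x$ strands $v$ forever.

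The backward direction has a genuine gap, sitting exactly where you flagged it. First, your stated invariant is not preserved by the copying procedure: once $\ell \to v$ is skipped and $v$ left white in $\tau$, every later force of $\sigma$ whose forcer is $v$, or whose forcer is adjacent to $v$, also becomes invalid in $\tau$ (the forcer is white, or has two white neighbors), so the set of vertices blue in $\sigma$ but white in $\tau$ cascades well beyond ``pending vertices that $\ell$ had already forced.'' Second, the discharge step is asserted rather than proved: the hypothesis supplies a second force $y \to v$ valid at the blue set of some \emph{other} chronological list, not at the current blue set of $\tau$, and the claim that it can ``eventually'' be fired without deadlock is essentially the theorem itself. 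To close the argument you need (i) the monotonicity observation that a force valid at a blue set $T$ remains valid at any $T' \supseteq T$ in which the target is still white, and (ii) an argument ruling out deadlock. One clean route: let $S$ be the closure of $B$ under all forces not performed by the leak $u$, and suppose $S \neq V(G)$. Closedness of $S$ implies every vertex of $S \setminus \{u\}$ with a neighbor in $W = V(G) \setminus S$ has at least two neighbors in $W$, so $W$ is a fort of $G$ except possibly at $u$. If $W$ is a genuine fort it is disjoint from $B$, contradicting that $B$ is a zero forcing set; otherwise $N(u) \cap W = \{w\}$ for a single $w$, and taking a chronological list that realizes the second force $q \to w$ with $q \neq u$, the first $W$-vertex colored in that list must, by the fort property, be forced by $u$ and hence equal $w$ --- contradicting that $w$ is forced there by $q$. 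Without an argument of this kind, your construction of $\tau$ is not shown to terminate with every vertex blue.
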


In our work on induced subgraphs of the $d$-dimensional grid, to fully characterize the $\ell$ leaky forcing number, particularly when $\ell\ge 2$, we utilize the notion of \emph{leaky forts}. Intuitively, a leaky fort is a vertex set $S$ that is resistant to forcing from the outside. Specifically, if all vertices in $V(G)\setminus S$ are blue, then no force can occur into $S$. The formal definition for a leaky fort is the following. 

\begin{defn}\label{def:lfort}
An \emph{$\ell$-leaky fort} in a graph $G$ is a subset $S\subseteq V(G)$ such that
\[
\left|\left\{x\in V(G)\setminus S:\ \left|N_G(x)\cap S\right|=1\right\}\right|\le \ell.
\]
\end{defn}

An illustration of a fort and an example of a 3-leaky fort of an induced subgraph of the $2$-dimensional grid $P_{5} \square P_{5}$ can be seen in Figure \ref{fig:LeakyForts}. The next result establishes the connection between leaky forts and the $\ell$-leaky forcing number of a graph, which will play a crucial role in determining $\ell$-leaky forcing numbers in Section~\ref{sec:FortsInducedSubgraphs}.

\begin{prop}[Proposition 2.3, \cite{KenterOGpaper}]\label{prop:LeakyForcingSetIntersectsForts}
For any graph $G$, a subset of vertices $S$ of $G$ is an $\ell$-leaky forcing set if and only if $S$ intersects every $\ell$-leaky fort of $G$.
\end{prop}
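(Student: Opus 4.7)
The plan is to prove the biconditional by establishing each direction via contrapositive, exploiting the natural correspondence between a leaky fort (a set resistant to forcing) and the ``stall set'' of vertices that remain white when a failed forcing process terminates.

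For the forward direction, I would assume $S$ is an $\ell$-leaky forcing set and let $F$ be any $\ell$-leaky fort, aiming to show $S \cap F \neq \emptyset$. Suppose toward contradiction that $S \cap F = \emptyset$. The key construction is to designate the (at most $\ell$) vertices in $V(G) \setminus F$ having exactly one neighbor in $F$ as leaks, padding the leak set $L$ arbitrarily up to size $\ell$ if needed. Since $S$ is an $\ell$-leaky forcing set, starting from $S$ with leak set $L$ must eventually color all of $G$ blue; in particular some force $v \to u$ with $u \in F$ must occur. Consider the first round in which this happens. Then $v \notin F$ (every vertex of $F$ is still white before this round, so vertices of $F$ cannot force). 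Applying the fort conditions to $v$ yields a contradiction in every case: $|N_G(v) \cap F| = 0$ contradicts $u \in N_G(v) \cap F$; $|N_G(v) \cap F| \geq 2$ means $v$ has at least two white neighbors in $F$, violating the forcing rule; and $|N_G(v) \cap F| = 1$ places $v \in L$, so $v$ is a leak and cannot force. No such first force can exist, the needed contradiction.

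For the backward direction, I would show the contrapositive: if $S$ is not an $\ell$-leaky forcing set, then $S$ misses some $\ell$-leaky fort. By assumption there exists a leak placement $L$ with $|L| \leq \ell$ such that the forcing process starting from $S$ with leaks $L$ terminates with a nonempty set $T \subseteq V(G)$ of vertices still white. I claim $T$ itself is the desired $\ell$-leaky fort. Disjointness $S \cap T = \emptyset$ is immediate since blue vertices stay blue. For the fort conditions, fix any $v \notin T$, so $v$ is blue at termination. If $v$ is not a leak and $|N_G(v) \cap T| = 1$, then $v$ has a unique white neighbor and could perform one more force, contradicting termination. Hence every $v \notin T$ with $|N_G(v) \cap T| = 1$ must lie in $L$, giving at most $\ell$ such vertices and verifying condition (2); and every non-leak vertex outside $T$ has $|N_G(v) \cap T|$ equal to $0$ or at least $2$, verifying condition (1) up to the allowed leak exceptions.

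The main obstacle is really just bookkeeping rather than depth: one must carefully match the adversarial leak set in the forward direction to precisely the single-neighbor exterior vertices of $F$, and recognize in the backward direction that the at-most-$\ell$ leaks in $L$ are exactly what absorb the single-neighbor exceptions allowed by the fort definition. Once this correspondence between ``leaks'' and ``single-neighbor witnesses'' is made explicit, both directions reduce to clean one-step contradictions without the need for induction.
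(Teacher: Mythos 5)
The paper does not actually prove this proposition; it is quoted as a known result from \cite{KenterOGpaper}, so there is no in-paper argument to compare against. Your proof is correct and is essentially the standard argument establishing the fort/forcing-set duality: in the forward direction you adversarially place the leaks exactly on the exterior vertices of the fort $F$ that have a unique neighbor in $F$, and then rule out a first force into $F$ by the trichotomy on $|N_G(v)\cap F|$; in the backward direction you observe that the stalled white set $T$ of a failed run is itself an $\ell$-leaky fort disjoint from $S$, because any blue non-leak with a unique white neighbor would contradict termination. Both halves are airtight. Two small points are worth making explicit. First, forts must be understood to be nonempty: otherwise $\emptyset$ vacuously satisfies Definition~\ref{def:lfort}, no set could intersect it, and the forward direction would fail; your argument tacitly uses nonemptiness when you assert that some force $v\to u$ with $u\in F$ must occur. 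Second, condition (1) of Definition~\ref{def:lfort} as literally written (holding for \emph{all} $v\in V(G)\setminus S$) would force the set in condition (2) to be empty and make $\ell$ irrelevant; your reading --- that at most $\ell$ exterior vertices are permitted to have exactly one neighbor in the fort, with condition (1) holding for the rest --- is the intended one, and it is the reading under which both directions of your proof, and the proposition itself, are valid.
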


\begin{figure}[h!]
    \centering

\tikzset{every picture/.style={line width=0.75pt}} 

\begin{tikzpicture}[x=0.75pt,y=0.75pt,yscale=-1,xscale=1]

\draw    (191.5,1930) -- (244.58,1908.42) ;
\draw    (186.5,1919) -- (244.58,1908.42) ;
\draw   (114,1963.33) .. controls (114,1923.02) and (163.02,1890.33) .. (223.5,1890.33) .. controls (283.98,1890.33) and (333,1923.02) .. (333,1963.33) .. controls (333,2003.65) and (283.98,2036.33) .. (223.5,2036.33) .. controls (163.02,2036.33) and (114,2003.65) .. (114,1963.33) -- cycle ;
\draw    (222,1985.33) .. controls (247,2006.33) and (201,2012.33) .. (226,2036.33) ;
\draw    (219,1937.33) .. controls (244,1958.33) and (197,1961.33) .. (222,1985.33) ;
\draw    (219.67,1890.67) .. controls (244.67,1911.67) and (194,1913.33) .. (219,1937.33) ;
\draw   (147.79,1955.63) .. controls (154.98,1925.5) and (174.5,1903.63) .. (191.39,1906.79) .. controls (208.28,1909.95) and (216.15,1936.94) .. (208.96,1967.08) .. controls (201.77,1997.21) and (182.25,2019.07) .. (165.35,2015.91) .. controls (148.46,2012.75) and (140.6,1985.76) .. (147.79,1955.63) -- cycle ;
\draw  [fill={rgb, 255:red, 255; green, 255; blue, 255 }  ,fill opacity=1 ] (241.33,1908.42) .. controls (241.33,1906.62) and (242.79,1905.17) .. (244.58,1905.17) .. controls (246.38,1905.17) and (247.83,1906.62) .. (247.83,1908.42) .. controls (247.83,1910.21) and (246.38,1911.67) .. (244.58,1911.67) .. controls (242.79,1911.67) and (241.33,1910.21) .. (241.33,1908.42) -- cycle ;
\draw    (310.72,1945.18) -- (294.02,1929.69) ;
\draw    (290.35,1951.8) -- (294.02,1929.69) ;
\draw  [fill={rgb, 255:red, 255; green, 255; blue, 255 }  ,fill opacity=1 ] (293.68,1932.92) .. controls (291.9,1932.74) and (290.6,1931.14) .. (290.79,1929.35) .. controls (290.98,1927.57) and (292.58,1926.27) .. (294.36,1926.46) .. controls (296.15,1926.64) and (297.44,1928.24) .. (297.25,1930.03) .. controls (297.07,1931.81) and (295.47,1933.11) .. (293.68,1932.92) -- cycle ;
\draw    (175.33,1944.33) -- (254.92,1943.92) ;
\draw  [fill={rgb, 255:red, 255; green, 255; blue, 255 }  ,fill opacity=1 ] (251.67,1943.92) .. controls (251.67,1942.12) and (253.12,1940.67) .. (254.92,1940.67) .. controls (256.71,1940.67) and (258.17,1942.12) .. (258.17,1943.92) .. controls (258.17,1945.71) and (256.71,1947.17) .. (254.92,1947.17) .. controls (253.12,1947.17) and (251.67,1945.71) .. (251.67,1943.92) -- cycle ;
\draw    (169.5,1959.33) -- (254.92,1959.58) ;
\draw  [fill={rgb, 255:red, 255; green, 255; blue, 255 }  ,fill opacity=1 ] (251.67,1959.58) .. controls (251.67,1957.79) and (253.12,1956.33) .. (254.92,1956.33) .. controls (256.71,1956.33) and (258.17,1957.79) .. (258.17,1959.58) .. controls (258.17,1961.38) and (256.71,1962.83) .. (254.92,1962.83) .. controls (253.12,1962.83) and (251.67,1961.38) .. (251.67,1959.58) -- cycle ;
\draw    (177.5,2000) -- (255.25,2000.92) ;
\draw  [fill={rgb, 255:red, 255; green, 255; blue, 255 }  ,fill opacity=1 ] (252,2000.92) .. controls (252,1999.12) and (253.46,1997.67) .. (255.25,1997.67) .. controls (257.04,1997.67) and (258.5,1999.12) .. (258.5,2000.92) .. controls (258.5,2002.71) and (257.04,2004.17) .. (255.25,2004.17) .. controls (253.46,2004.17) and (252,2002.71) .. (252,2000.92) -- cycle ;
\draw   (265.5,1943.33) .. controls (270,1958.61) and (274.5,1967.78) .. (279,1970.83) .. controls (274.5,1973.89) and (270,1983.06) .. (265.5,1998.33) ;
\draw  [fill={rgb, 255:red, 208; green, 2; blue, 27 }  ,fill opacity=1 ] (431.77,1911.93) .. controls (431.77,1909.37) and (433.86,1907.3) .. (436.44,1907.3) .. controls (439.02,1907.3) and (441.11,1909.37) .. (441.11,1911.93) .. controls (441.11,1914.48) and (439.02,1916.55) .. (436.44,1916.55) .. controls (433.86,1916.55) and (431.77,1914.48) .. (431.77,1911.93) -- cycle ;
\draw   (469.32,1911.93) .. controls (469.32,1909.37) and (471.41,1907.3) .. (473.99,1907.3) .. controls (476.57,1907.3) and (478.66,1909.37) .. (478.66,1911.93) .. controls (478.66,1914.48) and (476.57,1916.55) .. (473.99,1916.55) .. controls (471.41,1916.55) and (469.32,1914.48) .. (469.32,1911.93) -- cycle ;
\draw    (441.11,1911.93) -- (469.32,1911.93) ;
\draw    (436.44,1916.55) -- (436.44,1944.59) ;
\draw    (473.99,1916.55) -- (473.8,1944.59) ;
\draw  [fill={rgb, 255:red, 208; green, 2; blue, 27 }  ,fill opacity=1 ] (431.77,1948.93) .. controls (431.77,1946.37) and (433.86,1944.3) .. (436.44,1944.3) .. controls (439.02,1944.3) and (441.11,1946.37) .. (441.11,1948.93) .. controls (441.11,1951.48) and (439.02,1953.55) .. (436.44,1953.55) .. controls (433.86,1953.55) and (431.77,1951.48) .. (431.77,1948.93) -- cycle ;
\draw  [fill={rgb, 255:red, 208; green, 2; blue, 27 }  ,fill opacity=1 ] (469.32,1948.93) .. controls (469.32,1946.37) and (471.41,1944.3) .. (473.99,1944.3) .. controls (476.57,1944.3) and (478.66,1946.37) .. (478.66,1948.93) .. controls (478.66,1951.48) and (476.57,1953.55) .. (473.99,1953.55) .. controls (471.41,1953.55) and (469.32,1951.48) .. (469.32,1948.93) -- cycle ;
\draw    (441.11,1948.93) -- (469.32,1948.93) ;
\draw  [fill={rgb, 255:red, 208; green, 2; blue, 27 }  ,fill opacity=1 ] (507.32,1948.93) .. controls (507.32,1946.37) and (509.41,1944.3) .. (511.99,1944.3) .. controls (514.57,1944.3) and (516.66,1946.37) .. (516.66,1948.93) .. controls (516.66,1951.48) and (514.57,1953.55) .. (511.99,1953.55) .. controls (509.41,1953.55) and (507.32,1951.48) .. (507.32,1948.93) -- cycle ;
\draw    (479.11,1948.93) -- (507.32,1948.93) ;
\draw    (474.44,1953.55) -- (474.44,1981.59) ;
\draw    (511.99,1953.55) -- (511.8,1981.59) ;
\draw  [fill={rgb, 255:red, 208; green, 2; blue, 27 }  ,fill opacity=1 ] (469.77,1985.93) .. controls (469.77,1983.37) and (471.86,1981.3) .. (474.44,1981.3) .. controls (477.02,1981.3) and (479.11,1983.37) .. (479.11,1985.93) .. controls (479.11,1988.48) and (477.02,1990.55) .. (474.44,1990.55) .. controls (471.86,1990.55) and (469.77,1988.48) .. (469.77,1985.93) -- cycle ;
\draw   (507.32,1985.93) .. controls (507.32,1983.37) and (509.41,1981.3) .. (511.99,1981.3) .. controls (514.57,1981.3) and (516.66,1983.37) .. (516.66,1985.93) .. controls (516.66,1988.48) and (514.57,1990.55) .. (511.99,1990.55) .. controls (509.41,1990.55) and (507.32,1988.48) .. (507.32,1985.93) -- cycle ;
\draw    (479.11,1985.93) -- (507.32,1985.93) ;
\draw  [fill={rgb, 255:red, 208; green, 2; blue, 27 }  ,fill opacity=1 ] (512.55,1916.83) .. controls (509.99,1916.79) and (507.95,1914.67) .. (507.99,1912.09) .. controls (508.03,1909.51) and (510.13,1907.45) .. (512.69,1907.49) .. controls (515.24,1907.53) and (517.28,1909.65) .. (517.24,1912.23) .. controls (517.2,1914.81) and (515.1,1916.87) .. (512.55,1916.83) -- cycle ;
\draw    (512.11,1945.04) -- (512.55,1916.83) ;
\draw    (516.67,1949.78) -- (544.7,1950.21) ;
\draw    (517.24,1912.23) -- (545.28,1912.85) ;
\draw  [fill={rgb, 255:red, 208; green, 2; blue, 27 }  ,fill opacity=1 ] (548.96,1954.94) .. controls (546.41,1954.91) and (544.37,1952.78) .. (544.41,1950.2) .. controls (544.45,1947.62) and (546.55,1945.57) .. (549.11,1945.6) .. controls (551.66,1945.64) and (553.7,1947.77) .. (553.66,1950.35) .. controls (553.62,1952.93) and (551.52,1954.98) .. (548.96,1954.94) -- cycle ;
\draw  [fill={rgb, 255:red, 208; green, 2; blue, 27 }  ,fill opacity=1 ] (549.54,1917.4) .. controls (546.99,1917.36) and (544.95,1915.23) .. (544.99,1912.66) .. controls (545.03,1910.08) and (547.13,1908.02) .. (549.68,1908.06) .. controls (552.24,1908.1) and (554.28,1910.22) .. (554.24,1912.8) .. controls (554.2,1915.38) and (552.1,1917.44) .. (549.54,1917.4) -- cycle ;
\draw    (549.11,1945.6) -- (549.54,1917.4) ;
\draw   (475.12,1879.25) .. controls (472.56,1879.21) and (470.52,1877.09) .. (470.56,1874.51) .. controls (470.6,1871.93) and (472.71,1869.87) .. (475.26,1869.91) .. controls (477.82,1869.95) and (479.86,1872.07) .. (479.82,1874.65) .. controls (479.78,1877.23) and (477.67,1879.29) .. (475.12,1879.25) -- cycle ;
\draw    (474.69,1907.46) -- (475.12,1879.25) ;
\draw    (479.24,1912.2) -- (507.27,1912.63) ;
\draw    (479.82,1874.65) -- (507.56,1875.08) ;
\draw   (512.11,1879.82) .. controls (509.56,1879.78) and (507.52,1877.66) .. (507.56,1875.08) .. controls (507.6,1872.5) and (509.7,1870.44) .. (512.26,1870.48) .. controls (514.81,1870.52) and (516.85,1872.64) .. (516.81,1875.22) .. controls (516.77,1877.8) and (514.67,1879.86) .. (512.11,1879.82) -- cycle ;
\draw    (511.68,1908.03) -- (512.11,1879.82) ;
\draw  [fill={rgb, 255:red, 208; green, 2; blue, 27 }  ,fill opacity=1 ] (432.32,1986.93) .. controls (432.32,1984.37) and (434.41,1982.3) .. (436.99,1982.3) .. controls (439.57,1982.3) and (441.66,1984.37) .. (441.66,1986.93) .. controls (441.66,1989.48) and (439.57,1991.55) .. (436.99,1991.55) .. controls (434.41,1991.55) and (432.32,1989.48) .. (432.32,1986.93) -- cycle ;
\draw    (436.44,1953.55) -- (436.44,1981.59) ;
\draw    (441.11,1985.93) -- (469.32,1985.93) ;
\draw   (395.32,1987.93) .. controls (395.32,1985.37) and (397.41,1983.3) .. (399.99,1983.3) .. controls (402.57,1983.3) and (404.66,1985.37) .. (404.66,1987.93) .. controls (404.66,1990.48) and (402.57,1992.55) .. (399.99,1992.55) .. controls (397.41,1992.55) and (395.32,1990.48) .. (395.32,1987.93) -- cycle ;
\draw  [fill={rgb, 255:red, 208; green, 2; blue, 27 }  ,fill opacity=1 ] (394.77,1949.93) .. controls (394.77,1947.37) and (396.86,1945.3) .. (399.44,1945.3) .. controls (402.02,1945.3) and (404.11,1947.37) .. (404.11,1949.93) .. controls (404.11,1952.48) and (402.02,1954.55) .. (399.44,1954.55) .. controls (396.86,1954.55) and (394.77,1952.48) .. (394.77,1949.93) -- cycle ;
\draw    (404.11,1949.93) -- (432.32,1949.93) ;
\draw    (399.44,1954.55) -- (399.44,1982.59) ;
\draw    (404.11,1986.93) -- (432.32,1986.93) ;
\draw   (432.32,2024.93) .. controls (432.32,2022.37) and (434.41,2020.3) .. (436.99,2020.3) .. controls (439.57,2020.3) and (441.66,2022.37) .. (441.66,2024.93) .. controls (441.66,2027.48) and (439.57,2029.55) .. (436.99,2029.55) .. controls (434.41,2029.55) and (432.32,2027.48) .. (432.32,2024.93) -- cycle ;
\draw    (436.44,1991.55) -- (436.44,2019.59) ;
\draw    (473.99,1991.55) -- (473.8,2019.59) ;
\draw   (469.32,2023.93) .. controls (469.32,2021.37) and (471.41,2019.3) .. (473.99,2019.3) .. controls (476.57,2019.3) and (478.66,2021.37) .. (478.66,2023.93) .. controls (478.66,2026.48) and (476.57,2028.55) .. (473.99,2028.55) .. controls (471.41,2028.55) and (469.32,2026.48) .. (469.32,2023.93) -- cycle ;
\draw    (441.11,2023.93) -- (469.32,2023.93) ;

\draw (212.67,1865) node [anchor=north west][inner sep=0.75pt]    {$G$};
\draw (132.67,1939.67) node [anchor=north west][inner sep=0.75pt]    {$S$};
\draw (181.74,1970.38) node [anchor=north west][inner sep=0.75pt]    {$\vdots $};
\draw (246.24,1966.21) node [anchor=north west][inner sep=0.75pt]    {$\vdots $};
\draw (279.67,1959.17) node [anchor=north west][inner sep=0.75pt]    {$\ell $};
\draw (135.83,2042.5) node [anchor=north west][inner sep=0.75pt]  [font=\small]  {$\mathrm{( a) \ General} \ \ell -\mathrm{leaky\ fort}$};
\draw (373.67,2041.5) node [anchor=north west][inner sep=0.75pt]  [font=\small]  {$ \begin{array}{l}
\mathrm{( b) \ Example} \ \mathrm{of\ a} \ 3-\mathrm{leaky\ fort}
\end{array}$};

\end{tikzpicture}
    \caption{}
    \label{fig:LeakyForts}
\end{figure}

\section{Forts and leaky forcing on induced subgraphs of grid graphs}\label{sec:FortsInducedSubgraphs}
In this section, we study forts in induced subgraphs of the $d$-dimensional grid $P_{n_1}\square\cdots\square P_{n_d}$. The geometric key here is that any nonempty set of vertices of degree $2d$ in the induced subgraph has neighbors outside the set, namely, one in each $\pm\mathbf e_i$ direction where $e_i$ is the $i$th standard basis vector in $d$-dimensional space. We use this to show that when the number of leaks $\ell$ is fewer than $2d$, every nonempty $\ell$-leaky fort must non-trivially intersect the \emph{low-degree} portion of the induced subgraph, i.e. it must contain a vertex of degree at most $2d-1$. We begin with the following lemma.

\begin{lem}\label{lem:2d-exposed-neighbors}
Let $d\ge 1$ and let $G = P_{n_1}\square \cdots \square P_{n_d}$. Let $H$ be an induced subgraph of $G$, and set
\[
R \;=\; \{v\in V(H): \deg_H(v)=2d\}.
\]
If $R'\subseteq R$ is a nonempty subset of vertices, then there exist at least $2d$ \emph{distinct} vertices
\[
y_1^+,y_1^-,\dots,y_d^+,y_d^- \in V(H)\setminus R'
\]
such that
\[
\bigl|N_H(y_i^\pm)\cap R'\bigr|=1 \qquad \text{for each } i\in\{1,\dots,d\}.
\]
\end{lem}

\begin{proof}
Assume $R'\subseteq R$ is a nonempty subset of vertices of the set $R \;=\; \{v\in V(H): \deg_H(v)=2d\}$ where $H$ is an induced subgraph of the $d$-dimensional grid graph $G$. Identify the vertices in $G$ with $\{1,\dots,n_1\}\times\cdots\times\{1,\dots,n_d\}\subset \mathbb{Z}^d$ in the usual way, so that
two vertices are adjacent in $G$ if and only if they differ by $\pm \mathbf{e}_i$ in exactly one coordinate, where
$\mathbf{e}_i$ is the $i$th standard basis vector in $d$-dimensional space.

Let $i\in\{1,\dots,d\}$ and choose $x_i^+\in R'$ so that the $i$th coordinate $(x_i^+)_i$ is maximal among vertices in $R'$. Since $\deg_H(x_i^+)=2d$, all neighbors of $x_i^+$ must lie in $H$, and in particular, the neighbor
\[
y_i^+ \;:=\; x_i^+ + \mathbf{e}_i
\]
belongs to $V(H)$. Moreover $y_i^+\notin R'$ by maximality of $(x_i^+)_i$.

We now claim that $N_H(y_i^+)\cap R'=\{x_i^+\}$. Assuming $w\in N_H(y_i^+)\cap R'$, we must have $w=y_i^+\pm \mathbf{e}_j$ for some $j$.
If $j=i$, then $w=y_i^+-\mathbf{e}_i=x_i^+$ or $w=y_i^++\mathbf{e}_i$. By the maximality of the $i$th coordinate $(x_i^+)_i$ in $R'$, we cannot have $w = y_i^++\mathbf{e}_i$, and thus, $w=y_i^+-\mathbf{e}_i=x_i^+$. If $j\neq i$, then $w$ has as its $i$th coordinate $(y_i^+)_i=(x_i^+)_i+1$, again
contradicting maximality of $x_i^+$. Hence, we get $N_H(y_i^+)\cap R'=\{x_i^+\}$.

Similarly, choose $x_i^-\in R'$ so that $(x_i^-)_i$ is minimal among vertices of $R'$. Since $\deg_H(x_i^-)=2d$, the neighbor
\[
y_i^- \;:=\; x_i^- - \mathbf{e}_i
\]
belongs to $V(H)$, satisfies $y_i^-\notin R'$, and an analogous argument gives $N_H(y_i^-)\cap R'=\{x_i^-\}$. Repeating this argument for each $i\in \{1, ... , d\}$ gives $2d$ vertices  $y_1^+,y_1^-,\dots,y_d^+,y_d^-$. 

Lastly, to see that these $2d$ vertices are distinct, fix $i\in\{1,\dots,d\}$ and set
\[
M_i \;:=\; \max\{(v)_i : v\in R'\}
\qquad\text{and}\qquad
m_i \;:=\; \min\{(v)_i : v\in R'\},
\]
where we view $V(G)\subset\mathbb{Z}^d$ as above and $(v)_i$ as the $i$th coordinate of $v$. By construction, $(x_i^+)_i=M_i$ and $(x_i^-)_i=m_i$, so
\[
(y_i^+)_i=(x_i^+)_i+1=M_i+1
\qquad\text{and}\qquad
(y_i^-)_i=(x_i^-)_i-1=m_i-1,
\]
and in particular $y_i^+\neq y_i^-$.

Now let $j\neq i$. Since $y_j^\pm=x_j^\pm\pm \mathbf e_j$ only changes the $j$th coordinate, we have
\[
(y_j^\pm)_i \;=\; (x_j^\pm)_i \text{ for all } i\ne j.
\]
Because $x_j^\pm\in R'$, it follows that $m_i \le (x_j^\pm)_i \le M_i$, and hence
\[
(y_j^\pm)_i \;\le\; M_i \;<\; M_i+1 \;=\; (y_i^+)_i
\qquad\text{and}\qquad
(y_j^\pm)_i \;\ge\; m_i \;>\; m_i-1 \;=\; (y_i^-)_i.
\]
Thus $y_j^\pm \neq y_i^+$ and $y_j^\pm \neq y_i^-$. Since this holds for every $j\neq i$,
the vertices $y_1^+,y_1^-,\dots,y_d^+,y_d^-$ are distinct, and the result follows.

\end{proof}

Lemma~\ref{lem:2d-exposed-neighbors} shows that any nonempty set of vertices of degree $2d$ from an induced subgraph $H$ of the $d$-dimensional grid graph has at least $2d$ distinct ``outside'' vertices with a unique neighbor in $H$. Defining the boundary of $H$ as 
\[\delta H = \{v\in V(H) : \deg_H(v) \le 2d-1 \},\] we now prove that when $\ell\le 2d-1$, every nonempty $\ell$-leaky fort of $H$ must intersect $\delta H$ non-trivially. 

\begin{thm}\label{thm:lfort-hits-boundary-dgrid}
Let $d\ge 1$ and $H$ an induced subgraph of $P_{n_1}\square\cdots\square P_{n_d}$. If $F$ is a \emph{nonempty}
$\ell$-leaky fort of $H$ with $\ell\le 2d-1$, then $F$ must contain a vertex of degree at most $2d-1$, i.e. $F \cap \delta H\neq \emptyset$.
\end{thm}

\begin{proof}
Let $R=\{v\in V(H):\deg_H(v)=2d\}$ where $H$ is an induced subgraph of the $d$-dimensional grid graph, and $F$ an $\ell$-leaky fort of $H$ with $\ell\le 2d-1$ . Suppose, for the sake of contradiction, that $F\subseteq R$. By Lemma~\ref{lem:2d-exposed-neighbors},
the set $F$ would have $2d$ distinct vertices in $V(H)\setminus F$, each of which having exactly one neighbor in $F$. This contradicts
Definition~\ref{def:lfort} when $\ell\le 2d-1$ and hence, $F$ must contain at least one vertex of degree at most $2d-1$ implying $F\cap \delta H \ne \emptyset$.
\end{proof}

\begin{cor}\label{cor:LeakyForcingBoundaryEquality}
    Let $H$ be an induced subgraph of $P_{n_1}\square \cdots \square P_{n_d}$, and $2\le \ell \le 2d-1$. Then, we have 
    \[ |S_\ell| \le Z_{(\ell)}(H) \le |\delta H|\]
    where $S_\ell = \{v\in V(H) : \deg_H(v) \le \ell\}$. In particular, when $\ell = 2d-1$, we get $Z_{(\ell)}(H) = |\delta H|$. 
\end{cor}

\begin{proof}
    For any $\ell$ such that $2\le \ell \le 2d-1$, by Lemma \ref{lem:lowdegree}, we get the lower bound of $|S_{\ell}|\le Z_{(\ell)}(H)$. By Theorem \ref{thm:lfort-hits-boundary-dgrid}, since every $\ell$-leaky fort intersects $\delta H$, $\delta H$ is a leaky forcing set by Proposition \ref{prop:LeakyForcingSetIntersectsForts}, and thus, $Z_{(\ell)}(H)\le |\delta H|$. Moreover, when $\ell = 2d-1$, the sets $S_{\ell}$ and $\delta H$ coincide, giving equality. 
\end{proof}

\begin{cor}\label{cor:ClassifyLeaky}

Let $2\le \ell \le 2d-1$. For any induced subgraph $H$ of $P_{n_1}\square \cdots \square P_{n_d}$, if 
\[\deg_H(v) \in \{0,1,...,\ell\} \cup \{2d\} \ \text{ for all } v \in V(H),\]
then \[Z_{(\ell)}(H) = |S_{\ell}|\]
where $S_\ell = \{v\in V(H) : \deg_H(v) \le \ell\}$.
 
\end{cor}

\begin{proof}
    This follows since $\delta H = S_{\ell}$ for any induced subgraph $H$ that satisfies the condition $\deg_H(v) \in \{0,1,...,\ell\} \cup \{2d\} \ \text{ for all } v \in V(H)$. 
\end{proof}

In particular, Corollary~\ref{cor:ClassifyLeaky} is particularly effective for grid-induced graph families where vertices are either interior (and hence $\deg_H(v)=2d$) or lie in a low-degree boundary region (where $\deg_H(v)\le \ell$). Consequently, for many structured planar and lattice graph examples, the bounds from Section~\ref{sec:FortsInducedSubgraphs} collapse to an exact formula for $Z_{(\ell)}(H)$, which is the basis for the next section.





\section{Hopi Rectangle Graphs as induced subgraphs}\label{sec:ApplicationHopiRec}
We now utilize our previous results to aid in analyzing a specific family of induced subgraphs of the $2$-dimensional grid $P_{n_1}\square P_{n_2}$. In particular, we determine the zero forcing and leaky forcing numbers for the family of graphs
known as \emph{Aztec rectangle graphs}. These graphs arise as a natural generalization of the Aztec diamond graph, which was originally studied from a combinatorial perspective through the enumeration of domino tilings of the Aztec diamond \cite{FirstMentionAD,AztecDiamondGraphs-Benkart}. More recently, extensions to tilings of double Aztec rectangles have also been studied \cite{DoubleAztecRect}. From the graph-theoretic perspective, the equality of the zero forcing number and maximum nullity for Aztec diamond graphs was established in \cite{TechniquesLowerBoundZF} using
novel techniques. Motivated in part by this result, our aim in this section is to apply the general results of the previous
sections to the broader class of Aztec rectangle graphs. In doing so, we not only show that the zero forcing number and maximum nullity are equal for this family, but also give a complete characterization of the $\ell$ leaky forcing number for all $\ell\ge 1$.

A second motivation for our study of Aztec rectangle graphs was the curiosity about the origin of their name \textit{``Aztec Diamond/Rectangle graphs''}. Despite the name, this specific motif does not show up in traditional Aztec art. The first mention of the term ``Aztec Diamond'' was by the authors in \cite{FirstMentionAD} for which they determined the number of ways one can tile the Aztec diamond using dominoes. Further, in a blog post \cite{ProppBlog}, one of the authors of that same paper reflected on the name choice and explained the following: 

\begin{center}
\textit{
``In writing up the work I’d done with Elkies, Kuperberg and Larsen \cite{FirstMentionAD}, I dubbed the shapes we’d studied “Aztec diamonds” because the design can be found in much pre-Columbian art.  I tried to figure out if there was a specific group of Native Americans most closely associated with the motif but it seemed to be shared between many nations. I eventually concluded that the Hopi made the most use of it, but “Hopi diamond” didn’t sound as good as “Aztec diamond”, so I chose euphony over ethnographic accuracy.''}\end{center}


In the cited post, the authors selected the name for this graph family based on euphonic appeal rather than strict ethnographic accuracy. While this terminology may not perfectly align with the cultural or historical origins of the design and borrows a culturally significant name without accompanying historical context, it was chosen for ease of recognition within mathematical spaces. In light of this context, we use the term \textit{Hopi} in place of Aztec throughout the remainder of the paper.

\quad \quad There are several variations in how Hopi rectangles and corresponding Hopi rectangle graphs are defined. One approach defines a Hopi rectangle graph of order $(a,b)$ through the \textit{Hopi rectangle of order $(a,b)$}, which itself generalizes the \textit{Hopi diamond of order $n$}. As noted in \cite{SPARSEMatching}, the Hopi diamond graph of order $n$ can be viewed as the dual of the Hopi diamond of order $n$. More precisely, the Hopi diamond of order $n$ is a combinatorial object that can be realized as the union of lattice squares described in \cite{FirstMentionAD}. The associated Hopi diamond graph of order $n$ is then defined by taking the faces of these unit squares as vertices, with two vertices adjacent whenever their corresponding faces share an edge, as illustrated in Figure \ref{fig:Ad&ADgraph}.

\begin{figure}[h!]
    \centering

\tikzset{every picture/.style={line width=0.75pt}} 

\begin{tikzpicture}[x=0.75pt,y=0.75pt,yscale=-1,xscale=1]

\draw   (456.77,1593.93) .. controls (456.77,1591.37) and (458.86,1589.3) .. (461.44,1589.3) .. controls (464.02,1589.3) and (466.11,1591.37) .. (466.11,1593.93) .. controls (466.11,1596.48) and (464.02,1598.55) .. (461.44,1598.55) .. controls (458.86,1598.55) and (456.77,1596.48) .. (456.77,1593.93) -- cycle ;
\draw   (494.32,1593.93) .. controls (494.32,1591.37) and (496.41,1589.3) .. (498.99,1589.3) .. controls (501.57,1589.3) and (503.66,1591.37) .. (503.66,1593.93) .. controls (503.66,1596.48) and (501.57,1598.55) .. (498.99,1598.55) .. controls (496.41,1598.55) and (494.32,1596.48) .. (494.32,1593.93) -- cycle ;
\draw    (466.11,1593.93) -- (494.32,1593.93) ;
\draw    (461.44,1598.55) -- (461.44,1626.59) ;
\draw    (498.99,1598.55) -- (498.8,1626.59) ;
\draw   (456.77,1630.93) .. controls (456.77,1628.37) and (458.86,1626.3) .. (461.44,1626.3) .. controls (464.02,1626.3) and (466.11,1628.37) .. (466.11,1630.93) .. controls (466.11,1633.48) and (464.02,1635.55) .. (461.44,1635.55) .. controls (458.86,1635.55) and (456.77,1633.48) .. (456.77,1630.93) -- cycle ;
\draw   (494.32,1630.93) .. controls (494.32,1628.37) and (496.41,1626.3) .. (498.99,1626.3) .. controls (501.57,1626.3) and (503.66,1628.37) .. (503.66,1630.93) .. controls (503.66,1633.48) and (501.57,1635.55) .. (498.99,1635.55) .. controls (496.41,1635.55) and (494.32,1633.48) .. (494.32,1630.93) -- cycle ;
\draw    (466.11,1630.93) -- (494.32,1630.93) ;
\draw   (532.32,1630.93) .. controls (532.32,1628.37) and (534.41,1626.3) .. (536.99,1626.3) .. controls (539.57,1626.3) and (541.66,1628.37) .. (541.66,1630.93) .. controls (541.66,1633.48) and (539.57,1635.55) .. (536.99,1635.55) .. controls (534.41,1635.55) and (532.32,1633.48) .. (532.32,1630.93) -- cycle ;
\draw    (504.11,1630.93) -- (532.32,1630.93) ;
\draw    (499.44,1635.55) -- (499.44,1663.59) ;
\draw    (536.99,1635.55) -- (536.8,1663.59) ;
\draw   (494.77,1667.93) .. controls (494.77,1665.37) and (496.86,1663.3) .. (499.44,1663.3) .. controls (502.02,1663.3) and (504.11,1665.37) .. (504.11,1667.93) .. controls (504.11,1670.48) and (502.02,1672.55) .. (499.44,1672.55) .. controls (496.86,1672.55) and (494.77,1670.48) .. (494.77,1667.93) -- cycle ;
\draw   (532.32,1667.93) .. controls (532.32,1665.37) and (534.41,1663.3) .. (536.99,1663.3) .. controls (539.57,1663.3) and (541.66,1665.37) .. (541.66,1667.93) .. controls (541.66,1670.48) and (539.57,1672.55) .. (536.99,1672.55) .. controls (534.41,1672.55) and (532.32,1670.48) .. (532.32,1667.93) -- cycle ;
\draw    (504.11,1667.93) -- (532.32,1667.93) ;
\draw   (537.55,1598.83) .. controls (534.99,1598.79) and (532.95,1596.67) .. (532.99,1594.09) .. controls (533.03,1591.51) and (535.13,1589.45) .. (537.69,1589.49) .. controls (540.24,1589.53) and (542.28,1591.65) .. (542.24,1594.23) .. controls (542.2,1596.81) and (540.1,1598.87) .. (537.55,1598.83) -- cycle ;
\draw    (537.11,1627.04) -- (537.55,1598.83) ;
\draw    (541.67,1631.78) -- (569.7,1632.21) ;
\draw    (542.24,1594.23) -- (570.28,1594.85) ;
\draw   (573.96,1636.94) .. controls (571.41,1636.91) and (569.37,1634.78) .. (569.41,1632.2) .. controls (569.45,1629.62) and (571.55,1627.57) .. (574.11,1627.6) .. controls (576.66,1627.64) and (578.7,1629.77) .. (578.66,1632.35) .. controls (578.62,1634.93) and (576.52,1636.98) .. (573.96,1636.94) -- cycle ;
\draw   (574.54,1599.4) .. controls (571.99,1599.36) and (569.95,1597.23) .. (569.99,1594.66) .. controls (570.03,1592.08) and (572.13,1590.02) .. (574.68,1590.06) .. controls (577.24,1590.1) and (579.28,1592.22) .. (579.24,1594.8) .. controls (579.2,1597.38) and (577.1,1599.44) .. (574.54,1599.4) -- cycle ;
\draw    (574.11,1627.6) -- (574.54,1599.4) ;
\draw   (500.12,1561.25) .. controls (497.56,1561.21) and (495.52,1559.09) .. (495.56,1556.51) .. controls (495.6,1553.93) and (497.71,1551.87) .. (500.26,1551.91) .. controls (502.82,1551.95) and (504.86,1554.07) .. (504.82,1556.65) .. controls (504.78,1559.23) and (502.67,1561.29) .. (500.12,1561.25) -- cycle ;
\draw    (499.69,1589.46) -- (500.12,1561.25) ;
\draw    (504.24,1594.2) -- (532.27,1594.63) ;
\draw    (504.82,1556.65) -- (532.56,1557.08) ;
\draw   (537.11,1561.82) .. controls (534.56,1561.78) and (532.52,1559.66) .. (532.56,1557.08) .. controls (532.6,1554.5) and (534.7,1552.44) .. (537.26,1552.48) .. controls (539.81,1552.52) and (541.85,1554.64) .. (541.81,1557.22) .. controls (541.77,1559.8) and (539.67,1561.86) .. (537.11,1561.82) -- cycle ;
\draw    (536.68,1590.03) -- (537.11,1561.82) ;
\draw    (188.61,1577.03) -- (335.08,1577.37) ;
\draw    (188.94,1577.65) -- (188.94,1613.69) ;
\draw    (188.94,1614.15) -- (188.94,1650.19) ;
\draw    (188.94,1651.15) -- (188.94,1687.19) ;
\draw    (188.94,1687.65) -- (188.94,1723.69) ;
\draw    (188.61,1724.03) -- (224.82,1724.03) ;
\draw    (225.11,1724.03) -- (261.32,1724.03) ;
\draw    (152.6,1687.2) -- (298.04,1687.32) ;
\draw    (152.6,1651.6) -- (335.05,1651.28) ;
\draw    (152.6,1687.2) -- (153,1613.2) ;
\draw    (153,1613.2) -- (334.95,1613.58) ;
\draw   (457.32,1668.93) .. controls (457.32,1666.37) and (459.41,1664.3) .. (461.99,1664.3) .. controls (464.57,1664.3) and (466.66,1666.37) .. (466.66,1668.93) .. controls (466.66,1671.48) and (464.57,1673.55) .. (461.99,1673.55) .. controls (459.41,1673.55) and (457.32,1671.48) .. (457.32,1668.93) -- cycle ;
\draw    (461.44,1635.55) -- (461.44,1663.59) ;
\draw    (466.11,1667.93) -- (494.32,1667.93) ;
\draw   (420.32,1669.93) .. controls (420.32,1667.37) and (422.41,1665.3) .. (424.99,1665.3) .. controls (427.57,1665.3) and (429.66,1667.37) .. (429.66,1669.93) .. controls (429.66,1672.48) and (427.57,1674.55) .. (424.99,1674.55) .. controls (422.41,1674.55) and (420.32,1672.48) .. (420.32,1669.93) -- cycle ;
\draw   (419.77,1631.93) .. controls (419.77,1629.37) and (421.86,1627.3) .. (424.44,1627.3) .. controls (427.02,1627.3) and (429.11,1629.37) .. (429.11,1631.93) .. controls (429.11,1634.48) and (427.02,1636.55) .. (424.44,1636.55) .. controls (421.86,1636.55) and (419.77,1634.48) .. (419.77,1631.93) -- cycle ;
\draw    (429.11,1631.93) -- (457.32,1631.93) ;
\draw    (424.44,1636.55) -- (424.44,1664.59) ;
\draw    (429.11,1668.93) -- (457.32,1668.93) ;
\draw   (457.32,1706.93) .. controls (457.32,1704.37) and (459.41,1702.3) .. (461.99,1702.3) .. controls (464.57,1702.3) and (466.66,1704.37) .. (466.66,1706.93) .. controls (466.66,1709.48) and (464.57,1711.55) .. (461.99,1711.55) .. controls (459.41,1711.55) and (457.32,1709.48) .. (457.32,1706.93) -- cycle ;
\draw    (461.44,1673.55) -- (461.44,1701.59) ;
\draw    (498.99,1673.55) -- (498.8,1701.59) ;
\draw   (494.32,1705.93) .. controls (494.32,1703.37) and (496.41,1701.3) .. (498.99,1701.3) .. controls (501.57,1701.3) and (503.66,1703.37) .. (503.66,1705.93) .. controls (503.66,1708.48) and (501.57,1710.55) .. (498.99,1710.55) .. controls (496.41,1710.55) and (494.32,1708.48) .. (494.32,1705.93) -- cycle ;
\draw    (466.11,1705.93) -- (494.32,1705.93) ;
\draw    (225.18,1539.62) -- (225.11,1724.03) ;
\draw    (261.78,1539.89) -- (261.59,1723.93) ;
\draw    (298.39,1540.16) -- (298.04,1687.32) ;
\draw    (225.18,1539.62) -- (298.39,1540.16) ;
\draw    (335.08,1577.37) -- (334.95,1613.58) ;
\draw    (334.95,1613.58) -- (335.05,1651.28) ;

\draw (105.2,1733.4) node [anchor=north west][inner sep=0.75pt]  [font=\normalsize] [align=left] {(a) Hopi rectangle of order (2,3)};
\draw (345.6,1732.6) node [anchor=north west][inner sep=0.75pt]   [align=left] {(b) Hopi rectangle graph of order (2,3)};

\end{tikzpicture}
    \caption{}
    \label{fig:Ad&ADgraph}
\end{figure}

In \cite{KnuthAztecDiamond}, the authors defined the \textit{Hopi diamond graph of order $n$} as a graph on $nm$ vertices. Later \cite{VisualDefAztecDiamond} offered a more visual perspective on these graphs. This visual approach provided a useful foundation for working with the graph family and motivated our formulation of the Hopi rectangle graph in Definition \ref{def:UpdatedDefAD_{m,n}}. Our definition views the Hopi rectangle graph of order $(a,b)$ as an induced subgraph of the 2-dimensional grid graph $P_{a+b}\square P_{a+b}$ and was necessary both for implementing our work in SageMath, particularly in applying the algorithm described in \cite{moruzziLeakyForcing}, and for establishing several of our results by viewing the graph as being embedded in an integer lattice. Our definition, presented below, is motivated both by the visual formulation and by the techniques found in \cite{TechniquesLowerBoundZF}.

\begin{defn}\label{def:UpdatedDefAD_{m,n}}
    Let $a,b \in \mathbb{N}$ and consider the 2-dimensional grid graph given by $P_{a+b} \square P_{a+b}$. The \textit{Hopi rectangle graph of order $(a,b)$}, denoted by $HD(a,b)$, is the induced subgraph of $P_{a+b}\square P_{a+b}$ determined by the following subset of vertices
    $$H = \{(i,j): a-1 \leq i+j \leq 2b+a-1 \text{ and } |i-j| \leq a\}.$$

    \noindent That is, $HD(a,b) := (P_{a+b}\square P_{a+b})[H]$.

\end{defn} 

Visually, from our definition, starting with the 2-dimensional grid graph $P_{a+b} \square P_{a+b}$, we can construct $HD(a,b)$ by removing vertices from the corners according to the following conditions. Vertices with $i+j < a-1$ are removed from the lower-left corner, while those with $i+j > 2b+a-1$ are removed from the upper-right corner. The restriction $|i-j| \leq a$ excludes the upper-left and lower-right corners. We note that under this indexing convention, the vertex of $HD(a,b)$ with the largest $j$-coordinate is $(b, a+b-1)$.


Because of the high structure of this graph family, the number of vertices of a Hopi rectangle graph $HD(a,b)$ is given in the following proposition, which can be shown with an induction on $a$ and $b$. 

\begin{prop}\label{prop: Order of HD(m,n)}
For any $a,b \in \mathbb{N}$, the order of the Hopi rectangle graph of order $(a,b)$ is given by 
\[|V(HD(a,b))| = a+b + 2ab \] 
\end{prop}

It is a known observation that if an edge covering of a graph $G$ is determined by a set of subgraphs $H_i$, then the minimum rank of the graph $G$ is bounded above (and hence the maximum nullity bounded below) by the sum of the minimum ranks of the subgraphs $H_i$ which compose its edge covering \cite{FallatHogbenObservation}. That is,
\begin{equation}\label{eq:lowerboundmr}
    \text{mr}(G)\leq \sum_{i=1}^h \text{mr}(H_i),
\end{equation}

\noindent where $E(G) = \cup_i E(H_i)$. Utilizing this observation, we have the following two propositions that will be useful in establishing a lower bound on the maximum nullity of a given Hopi rectangle graph.

\begin{prop}\label{prop:edgecoverbounds}
    Let $G$ be any graph. If $G$ admits an edge covering by $\alpha$ copies of the cycle graph $C_4$, then $M(G) \ge |V(G)| - 2\cdot \alpha$.
\end{prop}

\begin{proof}
Let $G$ be a graph that admits an edge covering by $\alpha$ copies of $C_4$. By equation \ref{eq:lowerboundmr}, we have \[M(G) = |V(G)| - \operatorname{mr}(G) \ge |V(G)|  - \sum\limits_{i=1}^\alpha \operatorname{mr}(C_4).\] Since $\operatorname{mr}(C_4) = 2$, the result follows.
\end{proof}

\begin{prop}\label{prop:Edge Cover HD(m,n)}
For any $a,b\in\mathbb{N}$, there is an edge covering of the Hopi rectangle graph of order $(a,b)$ determined by $a\cdot b$ copies of the cycle subgraph $C_4$. In particular, $E(HD(a,b)) = \bigcup\limits_{i=1}^{ab} E((C_4))$.                                 
\end{prop}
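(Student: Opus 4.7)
The plan is to proceed by induction, mirroring the structure of the proof of Proposition~\ref{prop: Order of HD(m,n)}.

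First, I would establish the claim for $HD(1, n)$ by induction on $n$. The graph $HD(1,1)$ is itself a single $C_4$, so it is covered by one copy of $C_4$. For the inductive step, using Definition~\ref{def:UpdatedDefAD_{m,n}}, $HD(1, n+1)$ is obtained from $HD(1,n)$ by appending the three new vertices determined by the conditions $i+j \in \{2n,2n+1\}$ and $|i-j|\le 1$, which together with the unique vertex of maximum coordinate sum in $HD(1,n)$ form a new $C_4$. Because this new $C_4$ meets $HD(1,n)$ in exactly that single (cut) vertex, no edge is shared, so appending this $C_4$ to the inductive cover yields an edge cover of $HD(1,n+1)$ by $n+1$ copies of $C_4$.

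Next, I would fix $m$ and induct on $n$. The base case $HD(m,1) \cong HD(1,m)$ follows from Observation~\ref{obs:IsomorphicAsGraphs} combined with the previous paragraph. For the inductive step, I would reuse the vertex decomposition from the proof of Proposition~\ref{prop: Order of HD(m,n)}: $HD(m,k+1)$ is the union of $HD(m,k)$ and a subgraph isomorphic to $HD(m,1)$, overlapping in exactly $m$ vertices, which are those of $HD(m,k)$ with maximum coordinate sum, namely $\{(i, 2k+m-1-i) : k \leq i \leq k+m-1\}$. The crucial observation is that these $m$ overlap vertices are pairwise non-adjacent in $HD(m,k+1)$: consecutive vertices along this diagonal differ in \emph{both} coordinates by one, whereas the edges of $HD(m,k+1)$ come from the Cartesian product $P_{m+n}\,\square\,P_{m+n}$ by Definition~\ref{def:UpdatedDefAD_{m,n}} and therefore join vertices differing in exactly one coordinate. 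Consequently, $E(HD(m,k))$ and the edges of the embedded $HD(m,1)$ are disjoint subsets whose union is $E(HD(m,k+1))$. Combining the inductive cover of the former by $mk$ copies of $C_4$ with the base-case cover of the latter by $m$ copies yields an edge cover of $HD(m,k+1)$ by $m(k+1)$ copies of $C_4$, as required.

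The main point that requires careful checking is the non-adjacency of the $m$ overlap vertices, since without it the two edge sets could share an edge and the count could fail. Once this independence is verified directly from Definition~\ref{def:UpdatedDefAD_{m,n}}, the rest is straightforward bookkeeping analogous to the proof of Proposition~\ref{prop: Order of HD(m,n)}.
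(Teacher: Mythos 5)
Your proof is correct and takes essentially the same approach as the paper's: both construct the cover inductively by gluing a strip isomorphic to $HD(1,n)$ (in your case $HD(m,1)$, the same graph by Observation~\ref{obs:IsomorphicAsGraphs}) onto the previous Hopi rectangle along a diagonal of overlap vertices and taking the union of the $C_4$ covers. Your explicit verification that the $m$ overlap vertices are pairwise non-adjacent is a detail the paper leaves implicit and is a nice addition, though note that even a shared edge would not invalidate an edge \emph{covering} (as opposed to a partition), so the count of $mn$ copies would survive regardless.
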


\begin{proof}
	Consider the Hopi rectangle graph of order $(a,b)$, $HD(a,b)$. Proceeding with induction on $a$, consider $HD(1,b)$. We see that we can cover the edges of $HD(1,b)$ with $b$ copies of $C_4$ and write $E(HD(1,b)) = \bigcup\limits_{i=1}^{b} E((C_4))$. Now, suppose the edges of the graph $HD(a,b)$ can be covered via $a\cdot b$ copies of $C_4$, that is, $E(HD(a,b)) = \bigcup\limits_{i=1}^{ab} E((C_4))$. Considering $HD(a+1,b)$, one way to construct this graph is to begin with $HD(a,b)$ and adjoin $HD(1,b)$. This can be done by overlapping the degree two vertices along the upper boundary of $HD(1,b)$ with the degree two vertices along the lower boundary of $HD(a,b)$. Thus, accounting for all edges, we get that $E(HD(a+1,b)) =  E(HD(a,b)) \cup E(HD(1,b))$ as sets of edges and
        $$E(HD(a+1,b)) = \bigcup\limits_{i=1}^{ab} E((C_4)) \cup \bigcup\limits_{i=1}^{b} E((C_4)) = \bigcup\limits_{i=1}^{(a+1)b} E((C_4)),$$ 
    as desired.
\end{proof}

Combining Proposition \ref{prop:edgecoverbounds} and Proposition \ref{prop:Edge Cover HD(m,n)}, we get an upper bound on the minimum rank of $HD(a,b)$ and subsequently a lower bound on the maximum nullity of $HD(a,b)$.

\begin{cor}\label{cor:Min Rank HD(m,n)}
    The minimum rank of the Hopi Rectangle graph $HD(a,b)$ of order $(a,b)$ is bounded above by $2ab$, i.e. $$\text{mr}(HD(a,b))\leq 2ab.$$
Moreover, the maximum nullity of $HD(a,b)$ is bounded below by $a+b$. 
\end{cor}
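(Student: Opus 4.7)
The plan is to assemble the corollary directly from the ingredients already developed in the excerpt. The bound on minimum rank is an immediate application of the edge-covering inequality \eqref{eq:lowerboundmr} together with Proposition~\ref{prop:Edge Cover HD(m,n)}; the bound on maximum nullity is then obtained by invoking the standard identity $\mathrm{mr}(G)+M(G)=|V(G)|$ and inserting the vertex count from Proposition~\ref{prop: Order of HD(m,n)}.

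More concretely, I would first write the edge covering of $HD(m,n)$ as $E(HD(m,n))=\bigcup_{i=1}^{mn} E(C_4^{(i)})$, guaranteed by Proposition~\ref{prop:Edge Cover HD(m,n)}. Since $\mathrm{mr}(C_4)=2$ (a well-known computation: $C_4$ has maximum nullity $2$ on $4$ vertices, e.g., realized by a rank-$2$ symmetric matrix with nonzero pattern matching $C_4$), inequality \eqref{eq:lowerboundmr} gives
\[
\mathrm{mr}(HD(m,n)) \;\le\; \sum_{i=1}^{mn} \mathrm{mr}(C_4) \;=\; 2mn,
\]
which is the first assertion.

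For the second assertion, I would use the identity $M(G) = |V(G)| - \mathrm{mr}(G)$ (a standard consequence of the definitions of maximum nullity and minimum rank for real symmetric matrices). By Proposition~\ref{prop: Order of HD(m,n)}, $|V(HD(m,n))| = m+n+2mn$, so combining with the just-established upper bound on the minimum rank yields
\[
M(HD(m,n)) \;=\; |V(HD(m,n))| - \mathrm{mr}(HD(m,n)) \;\ge\; (m+n+2mn) - 2mn \;=\; m+n,
\]
completing the proof.

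There is no genuine obstacle here; every step is a direct substitution from previously established results. The only small subtlety worth flagging in the write-up is to make explicit the two external facts being quoted, namely $\mathrm{mr}(C_4)=2$ and the rank-nullity relation $\mathrm{mr}(G)+M(G)=|V(G)|$, so that the chain of inequalities is unambiguous to the reader.
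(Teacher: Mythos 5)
Your proposal is correct and follows essentially the same route as the paper: the minimum rank bound comes from the $C_4$ edge covering via inequality \eqref{eq:lowerboundmr} with $\mathrm{mr}(C_4)=2$, and the nullity bound follows from rank--nullity together with the order $m+n+2mn$ from Proposition~\ref{prop: Order of HD(m,n)}. Your write-up is in fact more explicit than the paper's, which simply states that the first part ``follows easily.''
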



\section{Zero forcing and leaky forcing on Hopi rectangle graphs}\label{sec:ZFonHopiRectanlges}
We now determine the zero forcing number and the leaky forcing numbers for the family of induced subgraphs \(HD(a,b)\) of \(P_{a+b}\square P_{a+b}\). In particular, our computation of the zero forcing number also shows that, for this family, maximum nullity and zero forcing coincide. We then turn to determining the leaky forcing numbers for this graph family. For \(\ell\ge 2\), we apply the results of Section~\ref{sec:FortsInducedSubgraphs} to fully characterize \(Z_{(\ell)}(HD(a,b))\). For \(\ell=1\), we instead use Theorem~\ref{prop:twoways} to determine \(Z_{(1)}(HD(a,b))\). Taken together, these cases suggest that a focused study of \(1\)-leaky forcing in induced subgraphs of \(d\)-dimensional grids would be a natural direction for future work.

\begin{lem}\label{lem:ZF Upper Bound}
Let $a,b \in \mathbb{N}$ and consider the following subset of vertices of $HD(a,b)$ 
\begin{equation}\label{eq:ZeroForcingSet}
    B=\{(i,j): i+j=a-1\}\cup\{(i,j):j=i+a\} \subseteq V(HD(a,b)).
\end{equation}
Then, the set $B$ has cardinality $a+b$ and is a zero forcing set of $HD(a,b)$.
\end{lem}
\begin{proof} 
\noindent Let $B_1 = \{(i,j): i+j=a-1\}$, $B_2 =\{(i,j):j=i+a\} $ and $B= B_1 \cup B_2$. We can see that, on the 2-dimensional grid $P_{a+b}\square P_{a+b}$, we have $|B_1| = a$ integer lattice points $(i,j)\in V(HD(a,b))$ that satisfy $i+j = a-1$. Similarly, there are exactly $|B_2| = b$ integer lattice points $(i,j)\in V(HD(a,b))$ that satisfy $j=i+a$. Also, if there was a coordinate point $(i,j)\in B_1\cap B_2$, then that coordinate point would have to satisfy the equation $i+(i+a) = a-1$, i.e. $2i = -1$, which does not have an integer solution. Therefore, $B_1\cap B_2 = \emptyset$ and 
    $$|B| = |B_1| + |B_2| - |B_1\cap B_2| = a+b.$$

\noindent Furthermore, we can see this set forces the entire graph blue by forcing horizontally across the graph, examples of which can be seen in Figure \ref{fig:Zfexample}. Hence, by induction on the rounds of forcing, every vertex of $HD(a,b)$ is eventually colored blue.
\end{proof}

\begin{figure}[h!]
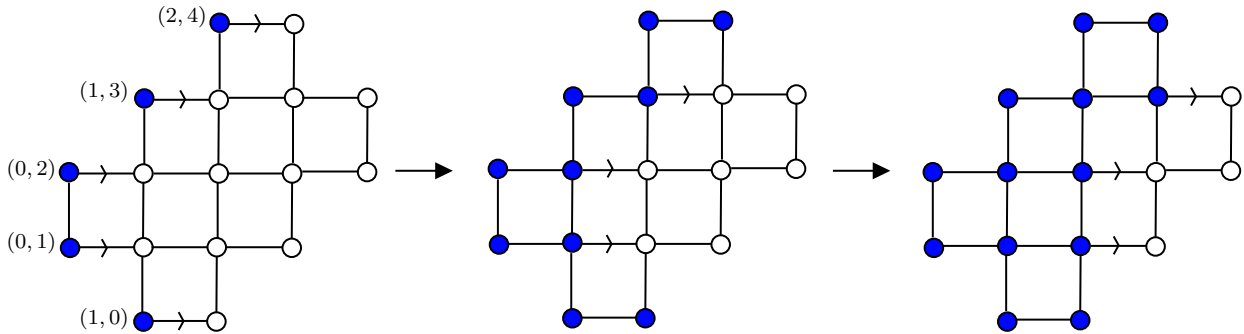

    \centering

\tikzset{every picture/.style={line width=0.75pt}} 



    \caption{Initial steps of zero forcing on $HD(2,3)$}
    \label{fig:Zfexample}
\end{figure}

The following establishes the values of both the maximum nullity and the zero forcing number for the family of Hopi rectangle graphs.

\begin{thm}\label{thm: ZF Number of HD(m,n)}
For all $a,b \in \mathbb{N}$, the zero forcing number of the Hopi rectangle graph $HD(a,b)$ is $a+b$. In particular, the maximum nullity and the zero forcing number are equal for this family of graphs.
\end{thm}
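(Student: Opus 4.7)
The plan is to chain together the three ingredients that have already been assembled in the paper to obtain a matched upper and lower bound of $m+n$ on both parameters. First I would invoke Corollary \ref{cor:Min Rank HD(m,n)}, which gives the lower bound $M(HD(m,n)) \geq m+n$ arising from the $C_4$ edge covering and the rank-nullity theorem applied to the order computation in Proposition \ref{prop: Order of HD(m,n)}. Next I would apply Theorem \ref{thm:NullityInequality} of the AIM workgroup to get $M(HD(m,n)) \leq Z(HD(m,n))$. Finally, Lemma \ref{lem:ZF Upper Bound} supplies the upper bound $Z(HD(m,n)) \leq m+n$ by exhibiting the explicit zero forcing set described in Equation \ref{eq:ZeroForcingSet}.

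Stringing these together, we obtain
\[
m+n \;\leq\; M(HD(m,n)) \;\leq\; Z(HD(m,n)) \;\leq\; m+n,
\]
so every inequality must in fact be an equality. In particular $Z(HD(m,n)) = m+n$ and $M(HD(m,n)) = Z(HD(m,n))$, which is precisely the claim of the theorem.

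There is essentially no obstacle left at this stage, since all of the substantive work has been carried out in the preceding sections: the nontrivial combinatorial content lies in the edge-covering by $C_4$'s in Proposition \ref{prop:Edge Cover HD(m,n)}, the vertex count in Proposition \ref{prop: Order of HD(m,n)}, and the verification that the boundary set $B$ in Equation \ref{eq:ZeroForcingSet} propagates across $HD(m,n)$ in horizontal rounds as in Lemma \ref{lem:ZF Upper Bound}. The theorem itself is then a one-line squeeze argument, and the only writing decision is whether to present the chain as a single display or to separate the two conclusions ($Z(HD(m,n)) = m+n$ and $M(HD(m,n)) = Z(HD(m,n))$) into two short paragraphs for emphasis.
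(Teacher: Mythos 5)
Your proposal is correct and matches the paper's own proof exactly: both chain Corollary \ref{cor:Min Rank HD(m,n)}, Theorem \ref{thm:NullityInequality}, and Lemma \ref{lem:ZF Upper Bound} into the squeeze $m+n \leq M(HD(m,n)) \leq Z(HD(m,n)) \leq m+n$. No gaps.
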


\begin{proof}
From Lemma $\ref{lem:ZF Upper Bound}$, the set $B$ given in equation \ref{eq:ZeroForcingSet} forms a zero forcing set of $HD(a,b)$ of size $a+b$ and thus, $Z(HD(a,b)) \leq a+b.$ On the other hand, by Corollary \ref{cor:Min Rank HD(m,n)}, we have that 
\[
a+b \le M(HD(a,b)) .
\]
Since maximum nullity is always bounded above by the zero forcing number as in Proposition \ref{thm:NullityInequality}, we have
\[
a+b \leq M(HD(a,b)) \leq Z(HD(a,b)) \leq a+b,
\]
which forces equality.

\end{proof}

Towards analyzing the leaky forcing number for $\ell=1$, we will use Theorem \ref{prop:twoways} and show that there are two distinct ways to force every vertex starting from the set of blue vertices $B$ given in Equation \ref{eq:ZeroForcingSet}. Recall, if a blue vertex $u \in B$ forces $v \in V(G)\setminus B$ blue, we denote this by $u \to v$. Given an initial blue set $B$, we write $F$ for the corresponding set of forces that arise from $B$. For Hopi rectangle graphs, we utilize the coordinate notation for the vertices and write a force as 
\[
(i_1,j_1) \to (i_2,j_2) \quad \text{for } (i_1,j_1),(i_2,j_2) \in V(HD(a,b)).
\]  

Let $a,b \in \mathbb{N}$ and consider the Hopi rectangle graph $HD(a,b)$. 
Starting with the set of vertices 
\[
B=\{(i,j): i+j=a-1\}\cup\{(i,j): j=i+a\} \subseteq V(HD(a,b)),
\]
we define three distinct sets of forces arising from the same initial blue set $B$. 
The set of horizontal forces, denoted $F_1$, is given in Lemma~\ref{lem:ZF Upper Bound}. 
Two additional sets of forces, $F_2$ and $F_3$, are defined below in Equation \ref{eq:F2Process} and \ref{eq:F3Process}. 
\vspace{-.2in}
\begin{center}
\begin{equation}\label{eq:F2Process}
    F_{2} = \left\{
\begin{array}{lll}
(i,j)\rightarrow(i+1,j), & i + j = a - 1 + 2k,\quad & 0 \leq k \leq b-1, \\[6pt]
(i,j)\rightarrow(i,j+1), & j = i + a - 2k,\quad & 1 \leq k \leq a,
\end{array}
\right\}
\end{equation}
\end{center}

\begin{center}
\begin{equation}\label{eq:F3Process}
    F_{3} = \left\{
\begin{array}{lll}
(i,j)\rightarrow(i+1,j), & j = i - a + 2k,\quad & 1 \leq k \leq b-1, \\[6pt]
(i,j)\rightarrow(i,j-1), & j = i + a - (2k+1),\quad & 0 \leq k \leq b-1,
\end{array}
\right\}
\end{equation}
\end{center}

Graphically, the set of forces $F_2$ can be visualized as a combination of forces occurring from left to the right and from bottom to the top of the graph, while the set of forces $F_3$ can be visualized as a combination of forces occurring from left to the right and from top to the bottom of the graph (See Figure~\ref{fig:F2Process} as an example). Each of these sets can be arranged into a chronological forcing process of $HD(a,b)$, each of which forces the graph in their own right. For example, in $HD(2,3)$, the resulting $F_2$ can be ordered into a chronological sequence of forces, with the beginning of such an ordering being
\[ 
 \{(0,1)\to(1,1),\ (1,0)\to(2,0),\ (2,0)\to(2,1),\ (1,1)\to(1,2),\ (1,2)\to(2,2), \ldots\}.
\]

The existence of these three distinct forcing processes, each arising from the same initial set $B$, allow us to establish our result for $1$-leaky forcing in the following theorem.

\begin{figure}[h!]
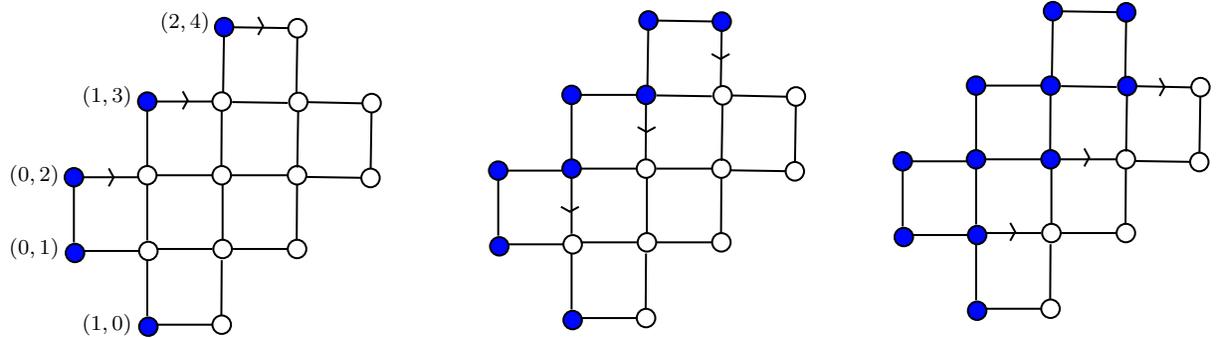

    \centering

\tikzset{every picture/.style={line width=0.75pt}} 


    \caption{Ordering of forces given by forcing process $F_3$}
    \label{fig:F3Process}
\end{figure}

\begin{thm}\label{thm:AR-1-leaky}
For all $a,b \in \mathbb{N}$, the $1$-leaky forcing number of the Hopi rectangle graph $HD(a,b)$ is $a+b$.
\end{thm}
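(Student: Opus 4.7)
The plan is to apply Proposition~\ref{prop:twoways}: we will show that the same blue set $B=\{(i,j): i+j=m-1\}\cup\{(i,j): j=i+m\}$ of size $m+n$ from Equation~\eqref{eq:ZeroForcingSet} is already a $1$-leaky forcing set by exhibiting two distinct forcers into every white vertex $v\in V(HD(m,n))\setminus B$. The key tools are the three forcing processes $F_1, F_2, F_3$ originating from $B$, each of which forces the entire graph.

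The process $F_1$ of Lemma~\ref{lem:ZF Upper Bound} is purely horizontal. I would first verify that $F_2$ and $F_3$, as defined in Equations~\eqref{eq:F2Process} and~\eqref{eq:F3Process}, admit valid chronological orderings by induction on the round of forcing. The idea is to propagate blue vertices outward from the two boundary segments comprising $B$: in $F_2$, for instance, the initial horizontal forces along $i+j=m-1$ combine with the initial vertical forces just below $j=i+m$ to create a blue frontier from which subsequent forces follow, and at each stage one checks that the prescribed source has a unique white neighbor. An analogous argument handles $F_3$.

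I would then compare the $F_2$ and $F_3$ forcers of each white vertex using parity. The horizontal rule of $F_2$ produces sources on diagonals with $i+j\equiv m-1\pmod 2$ and the vertical rule produces sources on $i+j\equiv m\pmod 2$, while $F_3$ swaps these two parities. Translating to the target $v=(i,j)\notin B$: if $i+j\equiv m\pmod 2$, then $F_2$ forces $v$ horizontally from $(i-1,j)$ and $F_3$ forces $v$ vertically from $(i,j+1)$; if $i+j\equiv m-1\pmod 2$, then $F_2$ forces $v$ vertically from $(i,j-1)$ and $F_3$ forces $v$ horizontally from $(i-1,j)$. In either case the two forcers are distinct vertices.

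By Proposition~\ref{prop:twoways}, $B$ is a $1$-leaky forcing set, whence $Z_{(1)}(HD(m,n))\le|B|=m+n$. The main obstacle is producing the chronological orderings that certify $F_2$ and $F_3$ as legitimate forcing processes; once those orderings are in hand, the parity comparison above completes the argument without any further case analysis.
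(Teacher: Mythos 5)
Your proposal is correct and follows essentially the same route as the paper: the same initial set $B$, the same three forcing processes $F_1,F_2,F_3$, and an application of Proposition~\ref{prop:twoways} after splitting the white vertices into two classes according to the parity of $i+j$ relative to $m$ (which is exactly the paper's ``which diagonal does $(i,j)$ lie on'' dichotomy), yielding the horizontal forcer $(i-1,j)$ together with a vertical forcer $(i,j-1)$ or $(i,j+1)$. The only cosmetic differences are that you attribute the horizontal force to $F_2$/$F_3$ rather than to $F_1$ and that you make explicit the need to verify chronological orderings for $F_2$ and $F_3$, which the paper asserts without proof.
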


\begin{proof}

Consider the Hopi rectangle graph $HD(a,b)$ with $a,b \in \mathbb{N}$, and let  
\[
B = \{(i,j): i + j = a - 1\}\cup\{(i,j): j = i + a\}
\]  
be the initial blue set of vertices of $HD(a,b)$. By Lemma~\ref{lem:ZF Upper Bound}, $B$ is a zero forcing set of $HD(a,b)$. To use Theorem~\ref{prop:twoways}, it remains to show that for every vertex $(i,j) \in V(HD(a,b))$, there exist distinct vertices $(x,x'),(y,y') \in V(HD(a,b))$ such that $(x,x') \to (i,j)$ and $(y,y') \to (i,j)$ are forces.

For any vertex $(i,j)\in V(HD(a,b)) \setminus B$, we first observe that $(i-1,j)\to(i,j)$ 
is always a valid force in $F_1$. The second distinct way to force $(i,j)$ blue depends on its position in the lattice. By construction, Case~1 consists of vertices whose second distinct way to force is realized in $F_2$ via a ``downward'' force, while Case~2 consists of vertices whose second distinct way to force is realized in $F_3$ via an ``upward'' force. The forcing sets $F_2$ and $F_3$ are disjoint, and every vertex in $V(HD(a,b))\setminus B$ has its second forcing step in exactly one of these two patterns, so each vertex falls into exactly one of the following cases.

\textbf{\textit{Case 1:}} If the vertex $(i,j)$ lies on a diagonal intersecting one of the initial blue vertices $\{(i,j): i+j=a-1\}$, then $(i,j-1)\to(i,j)$ is a valid force in $F_2$. 
    
\textbf{\textit{Case 2:}} If $(i,j)$ lies on a diagonal intersecting a non-blue vertex adjacent to one of the initial blue vertices, then $(i,j+1)\to(i,j)$ is a valid force in $F_3$.

Since each vertex of $HD(a,b)$ falls into exactly one of these two cases, every vertex 
$(i,j)\in V(HD(a,b))\setminus B$ can be forced in two distinct ways: one force from $F_1$ 
and another from either $F_2$ or $F_3$. 

Hence, by Theorem~\ref{prop:twoways}, the set $B$ is a $1$-leaky forcing set and we get
$Z_{(1)}(G) \leq a+b$. Moreover, since $Z(HD(a,b)) = a+b$ by Theorem \ref{thm: ZF Number of HD(m,n)}, we have $Z_{(1)}(HD(a,b)) = a+b$ by Lemma \ref{lem:moreleaksinq}.
\end{proof}

Having established the $1$-leaky forcing number of $HD(a,b)$, we now turn to determining $Z_{(\ell)}(HD(a,b))$ for all $\ell \ge 2$. First, we have the following lemma which counts the number of boundary vertices of $HD(a,b)$. 

\begin{lem}\label{lem:degtwoverts}
    For any $a,b\in\mathbb{N}$, the number of degree $2$ vertices of $HD(a,b)$ is $2(a+b)$, i.e. \[|\{v \in V(HD(a,b)) : \deg_{HD(a,b)}(v) = 2\}| = 2(a+b)\]
\end{lem}

\begin{proof}
Recall from the definition of $HD(a,b)$ that
\[
V(HD(a,b))=\{(i,j): a-1\le i+j\le 2b+a-1 \text{ and } |i-j|\le a\},
\]
and two vertices are adjacent when they differ by $\pm(1,0)$ or $\pm(0,1)$. Consider $v=(i,j)\in V(HD(a,b))$. If both inequalities are strict, i.e.
\[
a-1<i+j<2b+a-1 \qquad\text{and}\qquad |i-j|<a,
\]
then $a\le i+j\le 2b+a-2$ and $|i-j|\le a-1$, so each of the four neighbors
$(i\pm1,j)$ and $(i,j\pm1)$ still satisfies the defining inequalities and hence lies in $V(HD(a,b))$. Thus $\deg_{HD(a,b)}(v)=4$. Therefore, any vertex of degree $2$ must satisfy at least one of
\[
i+j=a-1,\quad i+j=2b+a-1,\quad i-j=a,\quad \text{or}\quad j-i=a.
\]

Now, a direct check shows that each of these boundary conditions forces degree $2$.
If $i+j=a-1$, then $(i-1,j)$ and $(i,j-1)$ violate the lower bound on $i+j$, while $(i+1,j)$ and $(i,j+1)$ lie in $V(HD(a,b))$, so $\deg(v)=2$. Similarly, if $i+j=2b+a-1$, then the only neighbors in $HD(a,b)$ are $(i-1,j)$ and $(i,j-1)$. If $i-j=a$, then the only neighbors in $HD(a,b)$ are $(i-1,j)$ and $(i,j+1)$, and if $j-i=a$, then the only neighbors in $HD(a,b)$ are $(i+1,j)$ and $(i,j-1)$. Hence the degree $2$ vertices are exactly the vertices on these four boundary lines.

Similar to the counting argument in Lemma \ref{lem:ZF Upper Bound}, one can see that these four sets are disjoint since, for example, if $i+j=a-1$ and $i-j=a$ then we would have $2i=2a-1$, which is impossible, so we may count them separately. Thus, we have
\[
|\{(i,j): i+j=a-1\}|=a,\qquad |\{(i,j): i+j=2b+a-1\}|=a,
\]
and
\[
|\{(i,j): i-j=a\}|=b,\qquad |\{(i,j): j-i=a\}|=b.
\]
Therefore,
\[
\bigl|\{v\in V(HD(a,b)):\deg_{HD(a,b)}(v)=2\}\bigr| \;=\; a+a+b+b \;=\; 2(a+b).
\]

\end{proof}

Now, applying our results from Section \ref{sec:FortsInducedSubgraphs}, we are able to obtain the following classification of the $\ell$-leaky numbers for this graph family. In particular, applying Corollary \ref{cor:LeakyForcingBoundaryEquality} and Lemma \ref{lem:degtwoverts} with $d= 2$, we have the following. 

\begin{thm}\label{cor:recover-hopi-from-versionB}
For all $a,b\in\mathbb{N}$, the Hopi rectangle graph $HD(a,b)$ satisfies
\[
Z_{(2)}\!\bigl(HD(a,b)\bigr)=Z_{(3)}\!\bigl(HD(a,b)\bigr)=2(a+b),
\]
and for all $\ell\ge 4$,
\[
Z_{(\ell)}\!\bigl(HD(a,b)\bigr)=a+b+2a b.
\]
\end{thm}




\begin{proof}
Let $a,b\in\mathbb{N}$, $\ell\ge 2$, and set $H:=HD(a,b)$. Since $H$ is an induced subgraph of
$P_{a+b}\square P_{a+b}$, Corollary~\ref{cor:LeakyForcingBoundaryEquality} gives
\begin{equation}\label{eq:inProofBoundingLeaky}
    |S_\ell|\ \le\ Z_{(\ell)}(H)\ \le\ |\delta H|,
\end{equation}
where
\[
S_\ell=\{v\in V(H):\deg_H(v)\le \ell\}
\qquad\text{and}\qquad
\delta H=\{v\in V(H):\deg_H(v)\le 3\}.
\]

\noindent Since every vertex of $H$ has degree $2$ or $4$, the boundary set $\delta H$ simplifies to
\[
\delta H=\{v\in V(H):\deg_H(v)=2\}.
\]

\noindent Now, consider first when $\ell=2$ or $\ell=3$. Since every vertex of $H$ has degree $2$ or $4$, we have
\[
S_\ell=\{v\in V(H):\deg_H(v)=2\}=\delta H.
\]
Thus the inequalities in Equation \ref{eq:inProofBoundingLeaky} above collapse to
\[
Z_{(\ell)}(H)=|S_\ell| \qquad\text{for }\ell=2,3.
\]
By Lemma \ref{lem:degtwoverts}, the set $S_\ell$ has size $|S_\ell|=2(a+b)$, and hence
\[
Z_{(2)}(H)=Z_{(3)}(H)=2(a+b).
\]

For $\ell\ge 4$, we have $\ell\ge \Delta(H)=4$, and therefore $Z_{(\ell)}(H)=|V(H)|$ by
Lemma~\ref{lem:lowdegree}. Applying Proposition~\ref{prop: Order of HD(m,n)} yields
\[
Z_{(\ell)}(H)=a+b+2ab \qquad\text{for all }\ell\ge 4.
\]
\end{proof}

\section{Conclusion and Future Work}\label{sec:conclusion}\label{sec:FutureWork}
\qquad The results in this paper were motivated by the exploration of the Hopi rectangle graph family and led us to provide general tools for computing the leaky forcing number on induced subgraphs of grid graphs. While our main application focused on Hopi rectangle graphs, the framework used throughout suggests several natural directions that seem both accessible and worthwhile.

\qquad A first direction is to move beyond the planar grid and consider induced subgraphs of higher-dimensional grids $P_{n_1}\square\cdots\square P_{n_d}$. In three dimensions, for instance, one can define natural analogues of Hopi-type regions by thickening the planar family into prisms such as $H\square P_m$, where $H$ is a Hopi rectangle graph. In contrast to the $2$-dimensional setting, these regions contain several boundary degrees, though ideas and results can be still be applied in this setting. The case when $\ell=0,1$ would seem to be the difficult cases to handle. Nevertheless, these examples provide a natural pathway for further projects in a way that parallels the planar results of this paper.

\qquad A second direction is to explore other planar families where our general theorems apply. In the same combinatorial spirit as the Hopi rectangles, a natural class of induced subgraphs of the $2$-dimensional grid graph to consider is given by \emph{polyomino graphs} which can be associated with the combinatorial objects \emph{polyominoes}. Concretely, if $S\subseteq V(P_{n_1}\square P_{n_2})$ is a nonempty subset of vertices with $(P_{n_1}\square P_{n_2})[S]$ connected, then the induced subgraph $(P_{n_1}\square P_{n_2})[S]$ is the adjacency graph of a polyomino region. This viewpoint captures many natural grid-based families beyond Hopi rectangles, including Young-diagram shapes, convex polyomino regions, and grid graphs obtained by deleting rectangular holes.





\bibliography{ref}

\end{document}